\theoremstyle{plain}
\newtheorem{theorem}{Theorem}[section]
\newtheorem{corollary}[theorem]{Corollary}
\newtheorem{lemma}[theorem]{Lemma}
\theoremstyle{definition}
\newtheorem{definition}{Definition}[section]
\theoremstyle{remark}
\newtheorem{remark}{Remark}[section]
\newtheorem{example}{Example}[section]
\numberwithin{equation}{section}
\numberwithin{table}{section}
\numberwithin{figure}{section}
\newcommand{\supp}{\rm supp}
\newcommand{\NN}{\rm NN}
\newcommand{\NC}{\rm NC}
\title[Noncrossing and nonnesting partitions of types A and B]
{A bijection between noncrossing and nonnesting partitions of types
A and B}
\author{Ricardo Mamede}
\address{Ricardo Mamede, CMUC, Department of Mathematics, University of Coimbra, 3001-454
Coimbra, Portugal} \email{mamede@mat.uc.pt}
\keywords{Root systems, noncrossing partitions, nonnesting partitions, bijection}
\subjclass[2000]{05A10; 05A15}
\thanks{This work was supported by CMUC - Centro de Matem\'atica da
Universidade de Coimbra.}
\begin{document}
\maketitle
\begin{abstract}
  The total number of noncrossing  partitions of type $\Psi$ is the $n$th Catalan number $\frac{1}{n+1}\binom{2n}{n}$ when $\Psi=A_{n-1}$,
    and the binomial coefficient $\binom{2n}{n}$ when $\Psi=B_n$, and these numbers coincide with the correspondent number of nonnesting partitions. For type A, there are several
    bijective proofs of this equality; in particular, the intuitive map that locally converts each crossing to a nesting one of them.
    In this paper we present a bijection between nonnesting and noncrossing partitions of types A and B that generalizes the type A bijection that locally converts each crossing to a nesting.
\end{abstract}

\section{Introduction}

The poset of noncrossing partitions can be defined in a uniform way for any finite Coxeter
group $W$.  More precisely, for $u,w\in W$, let $u\leq w$ if there is a
shortest factorization of $w$ as a product of reflections in $W$ having as prefix  such
a shortest factorization for $u$. This partial order turns $W$ into a graded poset $Abs(W)$
having the identity $1$ as its unique minimal element, where the rank of $w$ is the length
of the shortest factorization of $w$ into reflections. Let $c$ be a Coxeter element of $W$.
Since all Coxeter elements in $W$ are conjugate to each other, the interval $[1,c]$ in $Abs(W)$
is independent, up to isomorphism, of the choice of $c$. We denote this interval by
$\NC(W)$ or by $\NC(\Psi)$, where $\Psi$ is the Cartan-Killing type of $W$, and call it the {\it poset of noncrossing partitions} of $W$. It is a self-dual, graded lattice which reduces to the classical lattice of noncrossing partitions
of the set $[n]=\{1, 2,\ldots, n\}$ defined by Kreweras in \cite{k} when $W$ is the symmetric group $\mathfrak{S}_n$ (the Coxeter group of type $A_{n-1}$), and to its type
$B$ analogue defined by Reiner in \cite{reiner} when W is the hyperoctahedral group.
The elements in $\NC(W)$ are counted by the generalized Catalan numbers,
\begin{displaymath}Cat(W)=\prod_{i=1}^k\frac{d_i+h}{d_i},\end{displaymath}
where $k$ is the number of simple reflections in $W$, $h$ is the Coxeter number and $d_1,\ldots,d_k$ are the degrees of the fundamental invariants of $W$ (see \cite{arm,grove,hum,reiner} for details on the theory of Coxeter groups and  noncrossing partitions). When $W$ is the symmetric group $\mathfrak{S}_n$, the number $Cat(\mathfrak{S}_n)$ is just the usual $n$th Catalan number $\frac{1}{n+1}\binom{2n}{n}$, and in type $B_n$ this number is the binomial coefficient $\binom{2n}{n}$.

Nonnesting partitions were defined by Postnikov (see \cite[Remark
2]{reiner}) in a uniform way for all irreducible root systems
associated with Weyl groups. If $\Phi$ is such a system, $\Phi^+$ is
a choice of positive roots, and $\Delta$ is the simple system in
$\Phi^+$, define the {\it root order}  on $\Phi^+$ by
$\alpha\leq\beta$ if $\alpha,\beta\in\Phi^+$ and $\beta-\alpha$ is
in the positive integer span of the simple roots in $\Delta$.
Equipped with this partial order,  $(\Phi^+,\leq)$ is  the {\it root
poset} of the associated Weyl group $W$. A {\it nonnesting
partition} on $\Phi$ is just an antichain in root poset
$(\Phi^+,\leq)$. Denote by $\NN(W)$ or by $\NN(\Psi)$, where $\Psi$ is
the Cartan-Killing type of $W$, the set of all nonnesting partitions
of $W$. Postnikov  showed that the nonnesting
partitions in $\NN(W)$ are also counted by the generalized Catalan
number $Cat(W)$.

 In the case of the root systems of type $A$, different bijective proofs of the equality between the cardinals  $|\NN(A_{n-1})|=|\NC(A_{n-1})|$ are known (see \cite{arm,atha,atha1,zeng,stump}), and  recently,  Christian Stump \cite{stump} described a bijection between nonnesting
and noncrossing partitions for type $B$.
Our contribution in this paper is to present a uniform proof that $|\NN(\Psi)|=|\NC(\Psi)|$, for $\Psi=A_{n-1}$ and $\Psi=B_n$,  that generalizes the
bijection presented by Armstrong in \cite{arm}.

\section{Noncrossing and nonnesting partitions of types $A$ and $B$}
A partition of the set $[n]$ is a collection of nonempty disjoint subsets of $[n]$, called blocks, whose
union is $[n]$. The type of a partition $\pi$ of $[n]$ is the integer partition formed by the cardinals of the blocks of $\pi$. Let $B$ be a block of $\pi$. Then, the least element of $B$ is called an opener, the greatest element of $B$ is said to be a closer, and the remaining elements of $B$ are called transients. The sets of openers, closers and transients of $\pi$ will be denoted by $op(\pi)$, $cl(\pi)$, and $tr(\pi)$, respectively. The triples $\mathfrak{T}(\pi)=(op(\pi),tr(\pi),cl(\pi))$ encodes useful information about the partition $\pi$. For instance, the number of blocks is $|op(\pi)|=|cl(\pi)|$, and the number of blocks having only one element is $|op(\pi)\cap cl(\pi)|$.
A partition can be graphically
represented by placing the integers $1,2,\ldots,n$ along a line and
drawing arcs above the line between $i$ and $j$ whenever $i$ and $j$
lie in the same block and no other element between them does so.

A noncrossing partition of the set $[n]$ is a partition of $[n]$ such
that there are no $a < b < c < d$, with $a,c$ belonging to some
block of the partition and $b,d$ belonging to some other block. The
set of noncrossing partitions of $[n]$, denoted by \NC$(n)$, is a
lattice for the refinement order. A nonnesting partition of the set
$[n]$ is a partition of $[n]$ such that if $a < b < c < d$ and $a,d$
are consecutive elements of a block, then $b$ and $c$ are not both
contained in some other block.  The
set of nonnesting partitions of $[n]$ will be denoted by \NN$(n)$.
Graphically, the noncrossing condition means that no two of the
arcs cross, while the nonnesting condition means that no two arcs
are nested one within the other. For instance, the noncrossing
partition $\{\{2,3\},\{1,4,5\}\}$ and the nonnesting partition $\{\{1,3\},\{2,4,5\}\}$ are represented by
\begin{displaymath}
\begin{matrix}
\begindc{\commdiag}[8]
\obj(-4,0)[1]{$1$}[\south]\obj(-2,0)[2]{$2$}\obj(0,0)[3]{$3$}\obj(2,0)[4]{$4$}\obj(4,0)[5]{$5$}

\cmor((-2,1)(-1,2)(0,1))
\pdown(0,1){}[2]

\cmor((-4,1)(-1,3)(2,1))
\pdown(1,2){}[2]
\cmor((2,1)(3,2)(4,1))
\pdown(-2,2){}[2]
\enddc&\quad\text{ and }\quad
\begindc{\commdiag}[8]
\obj(-4,0)[1]{$1$}[\south]\obj(-2,0)[2]{$2$}\obj(0,0)[3]{$3$}\obj(2,0)[4]{$4$}\obj(4,0)[5]{$5$}

\cmor((-4,1)(-2,2)(0,1)) \pdown(-2,2){}[2]

\cmor((-2,1)(0,2)(2,1)) \pdown(-2,2){}[2] \cmor((2,1)(3,2)(4,1))
\pdown(-2,3){}[2]
\enddc
\end{matrix}
\end{displaymath}
respectively. Both partitions have $\{1,2\}$ as set of openers, $\{3,5\}$ as set of closers and $4$ is the only transient.
As pointed out in \cite{arm}, the map that locally converts
each crossing to a nesting
\begin{displaymath}\begindc{\commdiag}[6]
\obj(-16,0)[1]{a}[\south]\obj(-12,0)[2]{b}\obj(-8,0)[3]{c}\obj(-4,0)[4]{d}\obj(0,2)[5]{$\longrightarrow$}

\cmor((-16,1)(-12,3)(-8,1)) \pdown(0,3){}[2]
\cmor((-12,1)(-8,3)(-4,1)) \pdown(0,3){}[2]

\obj(4,0)[1]{a}[\south]\obj(8,0)[2]{b}\obj(12,0)[3]{c}\obj(16,0)[4]{d}
\cmor((4,1)(10,3)(16,1)) \pdown(0,3){}[2] \cmor((8,1)(10,2)(12,1))
\pdown(0,3){}[2]
\enddc\end{displaymath}
defines a bijection from \NN$(n)$ to \NC$(n)$ that preserves the
number of blocks. We will refer to this bijection as the L-map.

We will now review the usual combinatorial realizations of the
Coxeter groups of types $A$ and $B$, referring to \cite{hum} for any undefined terminology.
The Coxeter group  $W$ of type  $A_{n-1}$ is realized combinatorially as the symmetric
group $\mathfrak{S}_n$. The permutations in $\mathfrak{S}_n$ will be written in cycle notation. The simple generators of $\mathfrak{S}_n$ are  the transpositions of adjacent integers $(i\; i+1)$, for $i=1,\ldots,n-1$, and the reflections are the transpositions $(i\; j)$ for $1 \leq i < j \leq n $. To any permutation
$\pi\in \mathfrak{S}_n$ we associate the partition of the set
$[n]$ given by its cycle structure. This defines a isomorphism
between the posets \NC$(\mathfrak{S}_n)$ of noncrossing partitions of $\mathfrak{S}_n$, defined in the introduction, and \NC$(n)$, with respect to
the Coxeter element $c=(12\cdots n)$ \cite[Theorem 1]{biane}.

Denoting by $e_1,\ldots,e_n$ the standard basis of $\mathbb{R}^n$, the
root system of type $A_{n-1}$ consists of the set of vectors
\begin{displaymath}\Phi=\{e_i-e_j:i\neq j,1\leq i,j\leq n\},\end{displaymath}
each root $e_i-e_j$ corresponding to the transposition $(i\;j)$.
Take
\begin{displaymath}\Phi^+=\{e_i-e_j\in\Phi:i>j\}\end{displaymath} for the set of
positive roots and, defining $r_i:=e_{i+1}-e_{i}$, $i=1,\ldots,n-1$,
we obtain the simple system $\Delta=\{r_1,\ldots,r_{n-1}\}$ for
$\mathfrak{S}_n$. Note that
\begin{displaymath}e_i-e_j=\sum_{k=j}^{i-1}r_k,\qquad\text{ if }i>j\,.\end{displaymath}

The correspondence between the antichains in the root poset
$(\Phi^+,\leq)$ and the set of nonnesting partitions of $[n]$ is
given by the bijection which sends the positive root $e_i-e_j$ to the set partition of $[n]$ having  $i$ and $j$ in the same block. For instance, consider the root poset
$(\Phi^+,\leq)$ of type $A_4$:
\begin{displaymath}
\begindc{\commdiag}[9]
\obj(-12,0)[1]{$r_1$}\obj(-4,0)[2]{$r_2$}\obj(4,0)[3]{$r_3$}
 \obj(12,0)[4]{$r_4$}

 \obj(-8,4)[12]{$r_1+r_2$} \obj(0,4)[23]{$r_2+r_3$} \obj(8,4)[34]{$r_3+r_4$}

 \obj(-4,8)[123]{$r_1+r_2+r_3$} \obj(4,8)[234]{$r_2+r_3+r_4$}

 \obj(0,12)[1234]{$r_1+r_2+r_3+r_4$}

 \mor{1}{12}{}[\atright,\solidline]\mor{2}{12}{}[\atright,\solidline]
 \mor{2}{23}{}[\atright,\solidline]\mor{3}{23}{}[\atright,\solidline]
 \mor{3}{34}{}[\atright,\solidline]\mor{4}{34}{}[\atright,\solidline]

 \mor{12}{123}{}[\atright,\solidline]\mor{23}{123}{}[\atright,\solidline]
 \mor{23}{234}{}[\atright,\solidline]\mor{34}{234}{}[\atright,\solidline]

 \mor{123}{1234}{}[\atright,\solidline]\mor{234}{1234}{}[\atright,\solidline]
\enddc
\end{displaymath}
The antichain $r_1+r_2=e_3-e_1$ corresponds to the transposition
$(13)$ in the symmetric group $\mathfrak{S}_5$, and thus to the
nonnesting set partition $\{\{1,3\},\{2\},\{4\},\{5\}\}$, while the
antichain $\{r_1+r_2,r_2+r_3,r_4\}$ corresponds to the product of
transpositions $(13)(24)(45)=(13)(245)$ in $\mathfrak{S}_5$, and
thus to the nonnesting set partition $\{\{1,3\},\{2,4,5\}\}$.

Given a positive root $\alpha=r_i+r_{i+1}+\cdots+r_j\in\Phi^+$,
define the {\it support} of $\alpha$ as the set
\supp$(\alpha)=\{r_i,r_{i+1},\ldots,r_j\}$. The integers $i$ and $j$ will be called, respectively, the {\it first} and {\it last indices}
of $\alpha$, and the roots $r_i$ and $r_j$ the first and last elements of $\alpha$, respectively.  We have the following lemma.

\begin{lemma}\label{l1}
    Let $\alpha_1,\alpha_2$ be two roots in $\Phi^+$ with first and last indices ${i_1},{j_1}$ and ${i_2},{j_2}$, respectively. Then,  $\alpha_1,\alpha_2$ form an antichain if and only
if $i_1<i_2$ and $j_1<j_2$. %On the other hand, $\alpha_1\alpha_2$ corresponds to a noncrossing partition if and only %if $j_1\leq i_2$ or $i_1\leq i_2< j_2\leq j_1$.
\end{lemma}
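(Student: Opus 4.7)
The plan is to reformulate the root-order comparison of $\alpha_1$ and $\alpha_2$ as a containment condition on their supports, which in type $A_{n-1}$ are intervals of consecutive simple roots, and then translate that into the claimed inequalities on first and last indices.

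First, I would observe that every positive root in type $A_{n-1}$ is of the form $\beta=r_i+r_{i+1}+\cdots+r_j$, so each simple root appears in $\beta$ with coefficient $0$ or $1$ and $\supp(\beta)=\{r_i,r_{i+1},\ldots,r_j\}$. Writing $\alpha_t=\sum_k c_k^{(t)}r_k$ with $c_k^{(t)}\in\{0,1\}$, the difference $\alpha_2-\alpha_1=\sum_k(c_k^{(2)}-c_k^{(1)})r_k$ lies in the non-negative integer span of $\Delta$ if and only if $c_k^{(1)}\le c_k^{(2)}$ for every $k$, that is, $\supp(\alpha_1)\subseteq\supp(\alpha_2)$. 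Since supports are intervals, this is just the condition $i_2\le i_1\le j_1\le j_2$. Hence $\alpha_1\le\alpha_2$ in the root order iff $[i_1,j_1]\subseteq[i_2,j_2]$, and $\alpha_1,\alpha_2$ form an antichain iff neither interval contains the other.

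To finish, I would do a short case analysis, treating the labeling so that $i_1\le i_2$. If $i_1=i_2$, the two intervals share their left endpoint, so one is contained in the other and the roots are comparable. If $i_1<i_2$, then $[i_1,j_1]\not\subseteq[i_2,j_2]$ automatically, and the other possible containment $[i_2,j_2]\subseteq[i_1,j_1]$ holds precisely when $j_2\le j_1$. Thus incomparability is equivalent to the pair of strict inequalities $i_1<i_2$ and $j_1<j_2$, giving the lemma. No substantive obstacle is expected: the entire content of the lemma is the reduction of the root-order relation to inclusion of supports, which is immediate from the fact that positive roots in type $A$ have all coefficients in $\{0,1\}$.
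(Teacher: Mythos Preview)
Your argument is correct. The paper states Lemma~\ref{l1} without proof, so there is no authorial argument to compare against; your proposal supplies exactly the routine verification the paper omits. The reduction of the root order to support containment (via the observation that type~$A$ positive roots have all simple-root coefficients in $\{0,1\}$) is the natural and expected approach, and your case analysis under the convention $i_1\le i_2$ correctly resolves the implicit asymmetry in the lemma's statement.
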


\medskip

Consider now the Coxeter group  $W$ of type  $B_n$, with its usual combinatorial realization
as the hyperoctahedral group of signed permutations of \begin{displaymath}[\pm n]:=\{\pm 1,\pm 2,\ldots,\pm n\}.\end{displaymath} These
are permutations of $[\pm n]$ which commute with
the involution $i\mapsto -i$. We will write the elements of $W$ in cycle notation, using commas between elements.
The simple generators of $W$ are the transposition $(-1,\; 1)$ and the pairs $(-i-1,\; -i)(i,\; i+1)$ for
$i=1,\ldots,n-1$. The reflections in $W$ are the transpositions $(-i,\; i)$, for $i=1,\ldots,n$, and the pairs of
transpositions $(i,\; j)(-j,\; -i)$ for $i\neq j$.
Identifying the sets $[\pm n]$ and $[2n]$ through the map  $i\mapsto i$ for $i\in[n]$ and $i\mapsto n-i$ for $i\in\{-1,-2,\ldots,-n\}$,
allows us to identify the hyperoctahedral group $W$ with the subgroup $U$ of $\mathfrak{S}_{2n}$
 which commutes with  the permutation  $(1,n+1)(2,n+2)\cdots (n,2n)$. For example, the signed permutations $(1,3)$ and $(2,-3)(-2,3)$ in the
 hyperoctahedral group of type $B_3$ correspond to the permutations $(1\;3)$ and $(2\;6)(5\;3)$ in the symmetric group $\mathfrak{S}_6$.   It follows that \NC$(U)$ is a sublattice of \NC$(\mathfrak{S}_{2n})$, isomorphic to \NC$(W)$ (see \cite{arm}).

The type $B_n$ root system consists on the set of $2n^2$ vectors
\begin{displaymath}\Phi=\{\pm e_i:1\leq i\leq n\}\cup\{\pm e_i\pm e_j:i\neq j, 1\leq i,j\leq n\},\end{displaymath}
and we take
\begin{displaymath}\Phi^+= \{e_i:1\leq i\leq n\}\cup\{e_i\pm e_j:1\leq j<i\leq n\}\end{displaymath}
as a choice of positive roots.  Changing
the notation slightly from the one used for $\mathfrak{S}_n$, let
$r_1:=e_1$ and $r_i:=e_i-e_{i-1}$, for $i=2,\ldots,n$. The set
\begin{displaymath}\Delta:=\{r_1,r_2,\ldots,r_n\}\end{displaymath}
is a simple system for $W$, and easy computations show that
\begin{align*}
    e_i=\sum_{k=1}^ir_k,&\\
    e_i-e_j=\sum_{k=j+1}^{i}r_{k},&\qquad\text{ if }i>j\\
    e_i+e_j=2\sum_{k=1}^{j}r_k+\sum_{k=j+1}^{i}r_{k},&\qquad\text{ if }i>j\,.
\end{align*}
Each root $e_i,e_i-e_j$ and
$e_i+e_j$ defines a reflection that acts on  $\mathbb{R}^n$ as the permutation $(i,\;-i)$,
$(i,\;j)(-i,\;-j)$ and $(i,\;-j)(-i,\;j)$, respectively, and  we will identify the roots with the corresponding permutations.
For example, consider the root poset of type $B_3$  displayed below:
 \begin{displaymath}
\begindc{\commdiag}[9]
\obj(-4,0)[2]{$r_1$}\obj(4,0)[3]{$r_2$}
 \obj(12,0)[4]{$r_3$}

  \obj(0,4)[23]{$r_1+r_2$} \obj(8,4)[34]{$r_2+r_3$}

 \obj(-4,8)[123]{$2r_1+r_2$} \obj(4,8)[234]{$r_1+r_2+r_3$}

 \obj(0,12)[1234]{$2r_1+r_2+r_3$}\obj(-4,16)[11234]{$2r_1+2r_2+r_3$}

 \mor{2}{23}{}[\atright,\solidline]\mor{3}{23}{}[\atright,\solidline]
 \mor{3}{34}{}[\atright,\solidline]\mor{4}{34}{}[\atright,\solidline]

 \mor{23}{123}{}[\atright,\solidline]
 \mor{23}{234}{}[\atright,\solidline]\mor{34}{234}{}[\atright,\solidline]

 \mor{123}{1234}{}[\atright,\solidline]\mor{234}{1234}{}[\atright,\solidline]
 \mor{1234}{11234}{}[\atright,\solidline]
\enddc
\end{displaymath}
The antichain $\{2r_1+r_2,r_2+r_3\}$ corresponds to the signed
permutation $(1,-2,-3)(-1,2,3)$.

Using the inclusion $W\hookrightarrow\mathfrak{S}_{2n}$  specified
above, we may represent noncrossing and nonnesting
partitions of $W$ graphically using the conventions made for its type $A$
analogs. In these representations, we use the integers
$$-1,-2,\ldots,-n,1,2,\ldots,n,\quad\text{ or }\quad -n,\ldots,-2,-1,0,1,2,\ldots,n,$$
 respectively for noncrossing
and nonnesting partitions, instead of the usual $1,2,\ldots,2n$, where the presence of the zero in the ground set for nonnesting partitions is necessary to correctly represent (when present)
the arc between a positive number $i$ an its negative (see \cite{atha}).

Given  a noncrossing or a nonnesting partition $\pi$ of type $B_n$, let the set of openers $op(\pi)$ be formed by the least element of all blocks of $\pi$ having only positive integers; let the set of closers $cl(\pi)$ be formed by the greatest element of all blocks of $\pi$ having only positive integers and by the absolute values of the least and greatest elements of all blocks having positive and negative integers; and finally let the set of transients $tr(\pi)$ be formed by all elements of $[n]$ which are not in $op(\pi)\cup cl(\pi)$. For instance,  if $\pi$ is the nonnesting partition $\{\{-4,4\},\{-1,2\},\{-2,1\},\{3,5\},\{-3,-5\}\}$, represented below,
then $op(\pi)=\{3\}$, $cl(\pi)=\{1,2,4,5\}$ and $tr(\pi)=\emptyset$.
\begin{displaymath}\begindc{\commdiag}[6]
\obj(0,0)[1]{0}[\south]\obj(4,0)[1]{1}[\south]\obj(8,0)[1]{2}[\south]\obj(12,0)[1]{3}[\south]\obj(16,0)[1]{4}[\south]
\obj(20,0)[1]{5}[\south]\obj(-4,0)[1]{-1}[\south]\obj(-8,0)[1]{-2}[\south]\obj(-12,0)[1]{-3}[\south]\obj(-16,0)[1]{-4}[\south]
\obj(-20,0)[1]{-5}[\south]

\cmor((12,1)(16,2)(20,1)) \pdown(0,3){}[2]
\cmor((-12,1)(-16,2)(-20,1)) \pdown(0,3){}[2]
\cmor((-8,1)(-2,3)(4,1)) \pdown(0,3){}[2]
\cmor((8,1)(2,3)(-4,1)) \pdown(0,3){}[2]
\cmor((-16,1)(-8,4)(0,1)) \pdown(0,4){}[2]
\cmor((16,1)(8,4)(0,1)) \pdown(0,4){}[2]
\enddc\end{displaymath}

A factor $2r_i$ appearing in a positive root $\alpha$ will be called
a {\it double root} of $\alpha$. The {\it support} of a positive
root $\alpha\in\Phi^+$ is the set of simple and double roots in
$\alpha$. Define also the set $\overline{\supp}(\alpha)$ as the set formed by the simple roots appearing in alpha as simple or double roots. For instance, for
$\alpha=2r_1+\cdots+2r_j+r_{j+1}+\cdots+r_{i}$ and
$\beta=r_{\ell}+\cdots+r_k$ we have
\supp$(\alpha)=\{2r_1,\ldots,2r_j,r_{j+1},\ldots,r_i\}$, $\overline{\supp}(\alpha)=\{r_1,\ldots,r_i\}$, and
$\supp(\beta)=\overline{\supp}(\beta)=\{r_{\ell},\ldots,r_k\}$. The first and last indices
of $\alpha$ are, respectively $1,i$ and $\ell,k$. The integer $j$ will be called the last double index in $\alpha$. Define also the set $D_{\alpha}:=\{r_2,\ldots,r_j\}$ as the set of simple roots appearing in $\alpha$ as double roots, other than $r_1$.
We have the following lemma.

\begin{lemma}\label{l2}
    Let $\alpha$ and $\beta$ be two roots in $\Phi^+$ with first and last indices $i,j$ and $i',j'$, respectively.
     If neither $\alpha$ nor $\beta$ have double roots, then $\{\alpha,\beta\}$ is an antichain if and only if $i<i'$ and $j<j'$. If $\alpha$ has double roots, then $\{\alpha,\beta\}$ is an antichain if and only if  $j<j'$ and the number of double roots in $\alpha$ is greater than the
     number of double roots in $\beta$.
\end{lemma}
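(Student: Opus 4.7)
The plan is to prove Lemma~\ref{l2} by comparing the simple-root expansions of $\alpha$ and $\beta$ coefficient by coefficient, using the fact that $\alpha\leq\beta$ in the root poset holds exactly when $\beta-\alpha$ has non-negative coefficients on every simple root $r_k$. The three displayed identities just before the lemma let me put every $\alpha\in\Phi^+$ into a uniform shape: if $p,q$ are its first and last indices and $d$ is its last double index (with the convention $d=0$ when $\alpha$ has no double roots), then the coefficient of $r_k$ in $\alpha$ equals $\mathbf{1}[k\leq d]+\mathbf{1}[p\leq k\leq q]$, and $d\geq 1$ forces $p=1$. I will then run through three cases according to which of $\alpha,\beta$ carry double roots.

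In the case where neither $\alpha$ nor $\beta$ has double roots, both sit on $0/1$ vectors supported on contiguous intervals $[i,j]$ and $[i',j']$; the relation $\alpha\leq\beta$ becomes the containment $[i,j]\subseteq[i',j']$, and negating this and its swap gives the antichain condition $i<i'$ and $j<j'$ up to the trivial relabelling of the two roots. When $\alpha$ carries double roots but $\beta$ does not, the coefficient of $r_1$ is $2$ in $\alpha$ and at most $1$ in $\beta$, so $\alpha\not\leq\beta$ automatically, and $\beta\leq\alpha$ reduces to the support of $\beta$ being contained in the interval $[1,q_\alpha]$, i.e.\ to $q_\beta\leq q_\alpha$. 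Hence $\{\alpha,\beta\}$ is an antichain iff $q_\alpha<q_\beta$, and the stronger condition on the number of double roots is automatic because $\alpha$ has at least one and $\beta$ has none.

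The case where both $\alpha$ and $\beta$ carry double roots is where the main bookkeeping sits. From the indicator formula above, a direct componentwise comparison shows that $\beta-\alpha\geq 0$ iff $d_\alpha\leq d_\beta$ and $q_\alpha\leq q_\beta$; for the non-trivial direction I evaluate $\beta-\alpha$ at the witness simple root $r_{d_\alpha}$ to get a strictly negative coefficient whenever $d_\alpha>d_\beta$, and at $r_{q_\alpha}$ whenever $q_\alpha>q_\beta$. Forbidding both $\alpha\leq\beta$ and $\beta\leq\alpha$ therefore leaves exactly the two crossed configurations $(d_\alpha>d_\beta,\ q_\alpha<q_\beta)$ and its swap; relabelling so that $\alpha$ is the root with strictly more double roots extracts the two conditions in the lemma. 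The only real obstacle is this last case: one has to keep the two indicator summands $\mathbf{1}[k\leq d]$ and $\mathbf{1}[k\leq q]$ cleanly separated and pick the right witness simple root in each subcase, after which the assembly of the three cases is immediate.
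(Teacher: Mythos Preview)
Your argument is correct. The coefficient formula $\mathbf{1}[k\le d]+\mathbf{1}[p\le k\le q]$ (with $d=0$ when there are no double roots and $p=1$ whenever $d\ge 1$) is exactly what the three displayed identities before the lemma give, and your componentwise comparisons in all three cases are accurate; in particular, in the ``both have double roots'' case your choice of witness indices $k=d_\alpha$ and $k=q_\alpha$ does produce strictly negative coefficients, since $d_\gamma<q_\gamma$ for every root $\gamma$ with double roots, which rules out any cancellation between the two indicator differences. You are also right to flag that the biconditional in the lemma is meant ``up to relabelling'' of $\alpha$ and $\beta$; as literally stated for a fixed labelling the ``only if'' direction would fail, and your proof makes explicit the symmetric pair of crossed configurations that the antichain condition actually characterises.

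As for comparison with the paper: the paper states Lemma~\ref{l2} without proof and moves directly to Section~3, so there is no argument to compare against. Your write-up supplies a complete and clean justification where the paper leaves the verification to the reader.
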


\section{Main result}

Let $\Phi$ denote a root system of type $A$ or type $B$, and let $\Phi^+$ and $\Delta$ be defined as above. In view of lemmas \ref{l1} and \ref{l2}, we consider antichains $\{\alpha_1,\ldots,\alpha_m\}$ in $\Phi^+$  as ordered $m$-tuples numbered so that if ${i_{\ell}}$ is the last index of $\alpha_{\ell}$, then $i_1<\cdots<i_m$.

\begin{definition}
Given two positive roots $\alpha$ and $\beta$, with $\beta$ having no double roots, and such that the intersection of their supports is nonempty, define their union $\alpha\cup\beta$ and their intersection $\alpha\cap\beta$ as the positive roots with supports
$$\supp(\alpha\cup\beta):=\supp(\alpha)\cup\left(\supp(\beta)\setminus\overline{\supp}(\alpha)\right)
\quad\text{and}\quad\supp(\alpha\cap\beta):=\overline{\supp}(\alpha)\cap\supp(\beta),$$
respectively.  If moreover $\alpha$ has double roots, then define also their d-intersection $\alpha\cap^d\beta$ as the positive root with support $\supp(\alpha\cap^d\beta):=D_{\alpha}\cap\supp(\beta)$.
\end{definition}

\begin{example} The union and the intersections of the type $B_3$
positive roots $\alpha=2r_1+2r_2+r_3$ and $\beta=r_2+r_3+r_4$ are
$\alpha\cup\beta=2r_1+2r_2+r_3+r_4$, $\alpha\cap^d\beta=r_2$, and $\alpha\cap\beta=r_2+r_3$.
\end{example}

An antichain $(\alpha_1,\ldots,\alpha_m)$ is said to be {\it connected} if the intersection of the supports of any two adjacent roots $\alpha_i,\alpha_{i+1}$ is non empty.
The connected components \begin{displaymath}(\alpha_1,\ldots,\alpha_i), (\alpha_{i+1},\ldots,\alpha_j),\ldots,(\alpha_k,\ldots,\alpha_m)\end{displaymath} of an antichain $\alpha=(\alpha_1,\ldots,\alpha_m)$ are the connected sub-antichains of $\alpha$ for which the supports of the union of the roots in any two distinct components are disjoint.
For instance,  the
antichain $(r_1+r_2,r_2+r_3,r_4)$ has the connected components $(r_1+r_2,r_2+r_3)$ and $r_4$. We will use lower and upper arcs to match two roots in a connected antichain in a geometric manner. Two roots linked by a lower [respectively upper] arc are said to be l-linked [respectively, u-linked]. In what follows we will identify each root with the correspondent permutation.

\begin{definition}\label{def1}
Define the map $f$ from the set $\NN(\Phi)$ into $\NC(\Phi)$ recursively as follows.
When $\alpha_1$ is a positive root we set $f(\alpha_1):=\alpha_1$.
If
$\alpha=(\alpha_1,\ldots,\alpha_m)$ is a  connected antichain with $m> 2$, we have two cases:

$(a)$ If there are no double roots in the antichain, define \begin{displaymath}f(\alpha):=\left(\underset{k=1}{\overset{m}{\bigcup}}\alpha_k\right)f(\overline{\alpha}_2,\ldots,\overline{\alpha}_m),\end{displaymath}
where $\overline{\alpha}_k=\alpha_{k-1}\cap\alpha_k$ for $k=2,\ldots,m$.

%%%%%%%%%%%%%%%%%%%%%%%%%%%%%%%%%%%%%%%%%%%%%%%%%%%%%%%%%%%%%%%%%%%%%%%%%%%%%%%%%%%%%%%%%%%%%%%%%%%%%%
\medskip

$(b)$ Assume now that $\alpha_1,\ldots,\alpha_{\ell}$ have double
roots, for some $\ell\geq 1$, and $\alpha_{\ell+1},\ldots,\alpha_m$
have none. Let
$\Gamma_d:=(\alpha_1,\ldots,\alpha_{\ell})$ and $\Gamma:=(\alpha_{\ell+1},\ldots,\alpha_m)$.
We start by introducing l-links as follows.

Let $m'$ be the largest index of elements in $\Gamma$ such that the following holds: $\alpha_{m'}$ has a first index $i\neq 1$, so that there is a rightmost element, say $\alpha_{k}$, of $\,\Gamma_d$ which has a term $2r_i$. If there is such an integer $m'$, l-link $\alpha_{k}$ with $\alpha_{m'}$.
Then, ignore $\alpha_{k}$ and $\alpha_{m'}$ and proceed with the remaining roots as before. This procedure terminates after a finite number of steps (and not all elements of $\alpha$ need to be l-linked).

Next proceed by introducing u-links in $\alpha$.
The starting point of u-links, which we consider drawn from right to left, will be elements in $\Gamma$ that have no first index $1$ and are not l-linked. We will refer to these elements as admissible roots. So, let $m'$ be the smallest integer such that the following holds: $\alpha_{m'}$ is an admissible root with first index $i\neq 1$ so that there is a leftmost element, say $\alpha_k$ which has $r_i$ or $2r_i$ in its support and is not yet u-linked to an element on its right. If there is such an integer $m'$, u-link $\alpha_{k}$ with $\alpha_{m'}$. Remove $\alpha_{m'}$ from the set of admissible roots and proceed as before. Again this process terminates after a finite number of steps.

Finally,  let $T=\{t_1<\cdots<t_p\}$ be the collection of all last double indices of the roots in $\Gamma_d$ not l-linked, and all the last indices of the roots in $\alpha$ not u-linked to an element on its right.
 Then, define
\begin{displaymath}f(\alpha):=\pi_1\cdots\pi_{\ell}\pi_0\,\theta_1\,\cdots\theta_qf(\theta_{q+1},\ldots,\theta_s),\end{displaymath}
where for $j=1,\ldots,\ell$,
$\pi_j=2r_1+\cdots+2r_{j'}+r_{j'+1}+\cdots+r_{j''}$,
with $j'$ and $j''$ respectively the leftmost and rightmost integers in $T$ not considered yet; $\pi_0$ is either the root
$r_1+\cdots+r_{i_j}$,  if the first index of  $\alpha_{\ell+1}$ is
$1$, with $i_j$  the only integer in $T$ not used yet for
defining the roots $\pi_j$, or the identity
otherwise; each $\theta_j$, $j=1,\ldots,q$ is the d-intersection of l-linked roots, starting from the rightmost one in $\Gamma_d$, and each $\theta_j$, $j=q+1,\ldots,s$ is the intersection of u-linked roots, starting from the leftmost one in $\Gamma$.

\medskip
%%%%%%%%%%%%%%%%%%%%%%%%%%%%%%%%%%%%%%%%%%%%%%%%%%%%%%%%%%%%%%%%%%%%%%%%%%%%%%%%%%%%%%%%%%%%%%%%%%%%%%

$(c)$ For the general case, if  $(\alpha_1,\ldots,\alpha_i), (\alpha_{i+1},\ldots,\alpha_j),\ldots,(\alpha_k,\ldots,\alpha_m)$ are the connected components of $(\alpha_1,\ldots,\alpha_m)$, let
\begin{displaymath}f(\alpha_1,\ldots,\alpha_m):=f(\alpha_1,\ldots,\alpha_i)f(\alpha_{i+1},\ldots,\alpha_j)\cdots f(\alpha_k,\ldots,\alpha_m).\end{displaymath}
\end{definition}

\begin{remark}
$(i)$ Notice that in the type $A$ case, the map $f$ is defined only by conditions $(a)$ and $(c)$ of the above definition.
Also, note that if all roots in $\alpha$ have double roots then condition $(b)$ is vacuous and  the map $f$ reduces to the identity map. We point out that the number of roots in $f(\alpha)$ is equal to the number of roots in the antichain $\alpha$.

$(ii)$ The sequence  $(\overline{\alpha}_2,\ldots,\overline{\alpha}_m)$ obtained in step $(a)$ is a (not necessarily connected) antichain.
    It is easy to check that after all l-links and all u-links are settled, the set $T$ has an odd number of elements if and only if the first index of $\alpha_{\ell+1}$ is $1$. Thus, the root $\pi_0$ given in condition $(b)$ is well defined.

%$(iii)$ A closer look at the construction of  $f$ shows that this map preserves the triples %$(op(\alpha),cl(\alpha),tr(\alpha))$ for any antichain $\alpha$.
\end{remark}

We will show that $f$ establishes a bijection between the sets $\NN(\Psi)$ and $\NC(\Psi)$, for $\Psi=A_{n-1}$ or $\Psi=B_n$. Before, however, we present some examples.

\begin{example}\label{exnn1}
    Consider the antichain $\alpha=(r_1+r_2,r_2+r_3,r_3+r_4+r_5,r_4+r_5+r_6,r_5+r_6+r_7)$ in the root poset of type $A_7$, corresponding to the permutation $(136)(247)(58)$ in the symmetric group $\mathfrak{S}_8$. Applying the map $f$ to $\alpha$, we get the noncrossing partition
    \begin{align*}
    f(\alpha)&=(r_1+\cdots+r_7)f(r_2,r_3,r_4+r_5,r_5+r_6)\\
    &=(r_1+\cdots+r_7)r_2r_3f(r_4+r_5,r_5+r_6)\\
    &=(r_1+\cdots+r_7)r_2r_3(r_4+r_5+r_6)r_5\\
    &\equiv(18)(2347)(56),
    \end{align*}
whose graphical representation is given below:
\begin{displaymath}\begindc{\commdiag}[8]
\obj(-14,0)[1]{$1$}\obj(-10,0)[2]{$2$}\obj(-6,0)[3]{$3$}\obj(-2,0)[4]{$4$}
\obj(2,0)[5]{$5$}\obj(6,0)[6]{$6$}\obj(10,0)[7]{$7$}\obj(14,0)[8]{$8$}
\cmor((-14,1)(0,6)(14,1))
\pdown(0,6){}[2]
\cmor((-10,1)(-8,2)(-6,1))
\pdown(2,3){}[2]
\cmor((-6,1)(-4,2)(-2,1))
\pdown(2,3){}[2]
\cmor((-2,1)(4,4)(10,1))
\pdown(2,3){}[2]
\cmor((2,1)(4,2)(6,1))
\pdown(2,3){}[2]
\enddc\;\;.\end{displaymath}

\end{example}

\begin{example}\label{exnn2}
    Consider now the antichain $\alpha=(\alpha_1,\alpha_2,\alpha_3,\alpha_4,\alpha_5)$ in the root poset $B_9$, where
    \begin{align*}
    \alpha_1&=2r_1+2r_2+2r_3+2r_4+r_5,&
    \alpha_2&=2r_1+2r_2+r_3+r_4+r_5+r_6,\\
    \alpha_3&=r_1+r_2+r_3+r_4+r_5+r_6+r_7,
    &\alpha_4&=r_3+r_4+r_5+r_6+r_7+r_8,\\
    \alpha_5&=r_4+r_5+r_6+r_7+r_8+r_9.
    \end{align*}

    Following definition \ref{def1}, we get the l-links and the u-links shown below:
    \begin{displaymath}\begindc{\commdiag}[10]
\obj(-14,0)[1]{$\alpha=(\alpha_1,\quad\alpha_2,\quad\alpha_3,\quad\alpha_4,\quad\alpha_5)$}
\cmor((-18,1)(-15,2)(-11,1))
\pdown(0,2){}[2]
\cmor((-18,-1)(-13,-2)(-8,-1))
\pdown(0,-2){}[2]
\enddc.\end{displaymath}
  Therefore,
    $T=\{2,6,7,8,9\}$ and the application of $f$ to $\alpha$
    yields:
    \begin{align*}
    f(\alpha)&=(2r_1+2r_2+r_3+\cdots+r_9)(2r_1+\cdots+2r_6+r_7+r_8)(r_1+\cdots+r_7)r_4
    f(r_3+r_4+r_5)\\
    &\equiv(2,-9)(-2,9)(6,-8)(-6,8)(7,-7)(3,4)(-3,-4)(2,5)(-2,-5)\\
    &=(2,5,-9)(-2,-5,9)(6,-8)(-6,8)(7,-7)(3,4)(-3,-4).
    \end{align*}
    The image $f(\alpha)$ is a noncrossing partition in $[\pm 9]$, as we may check in its representation:
    \begin{displaymath}\begindc{\commdiag}[6]
\obj(-22,0)[4]{$-4$}\obj(-18,0)[5]{$-5$}\obj(-14,0)[6]{$-6$}\obj(-10,0)[7]{$-7$}\obj(-6,0)[8]{$-8$}\obj(-2,0)[9]{$-9$}
\obj(-34,0)[1]{$-1$}\obj(-30,0)[2]{$-2$}\obj(-26,0)[3]{$-3$}
\obj(2,0)[-1]{$1$}\obj(6,0)[-2]{$2$}\obj(10,0)[-3]{$3$}\obj(14,0)[-4]{$4$}\obj(18,0)[-5]{$5$}\obj(22,0)[-6]{$6$}
\obj(26,0)[-7]{$7$}\obj(30,0)[-8]{$8$}\obj(34,0)[-9]{$9$}

\cmor((-30,1)(-24,3)(-18,1))
\pdown(13,13){}[2]
\cmor((34,1)(8,13)(-18,1))
\pdown(13,13){}[2]
\cmor((-22,1)(-24,2)(-26,1))
\pdown(13,13){}[2]
\cmor((-14,1)(8,11)(30,1))
\pdown(13,13){}[2]
\cmor((-10,1)(8,9)(26,1))
\pdown(13,13){}[2]
\cmor((-6,1)(8,7)(22,1))
\pdown(13,13){}[2]
\cmor((6,1)(12,3)(18,1))
\pdown(13,13){}[2]
\cmor((-2,1)(2,2)(6,1))
\pdown(13,13){}[2]
\cmor((10,1)(12,2)(14,1))
\pdown(13,13){}[2]

\enddc\;\;.\end{displaymath}
\end{example}

\medskip

\begin{example}
    For a final example, consider the antichain $\alpha=(\alpha_1,\ldots,\alpha_6)$ in the root poset of type $B_{11}$, where
    \begin{align*}
    \alpha_1&=2r_1+2r_2+2r_3+2r_4+2r_5+r_6,&\alpha_2&=2r_1+2r_2+2r_3+2r_4+r_5+r_6+r_7,\\
    \alpha_3&=2r_1+2r_2+2r_3+r_4+r_5+r_6+r_7+r_8,&\alpha_4&=r_2+r_3+r_4+r_5+r_6+r_7+r_8+r_9,\\
    \alpha_5&=r_5+r_6+r_7+r_8+r_9+r_{10},\text{ and }&\alpha_6&=r_7+r_8+r_9+r_{10}+r_{11}.\\
    \end{align*}
    The l-links and u-links are shown below, so that $T=\{4,6,8,9,10,11\}$:
    \begin{displaymath}\begindc{\commdiag}[10]
\obj(-14,0)[1]{$\alpha=(\alpha_1,\quad\alpha_2\;,\quad\alpha_3\;,\quad\alpha_4\;,\quad\alpha_5\;,\quad\alpha_6)$}
\cmor((-14,-1)(-13,-2)(-12,-1))
\pdown(0,-2){}[2]
\cmor((-20,-1)(-14,-3)(-9,-1))
\pdown(0,-3){}[2]
\cmor((-17,1)(-12,2)(-7,1))
\pdown(0,2){}[2]
\enddc.\end{displaymath}

 Therefore, the application of $f$ to $\alpha$ gives
    \begin{math}f(\alpha)=\pi_1\pi_2\pi_3(r_2+r_3)(r_5)f(r_7),\end{math}
    where
    \begin{align*}
    \pi_1&=2r_1+2r_2+2r_3+2r_4+r_5+r_6+r_7+r_8+r_9+r_{10}+r_{11},\\
    \pi_2&=2r_1+2r_2+2r_3+2r_4+2r_5+2r_6+r_7+r_8+r_9+r_{10},\text{ and }\\ \pi_3&=2r_1+2r_2+2r_3+2r_4+2r_5+2r_6+2r_7+2r_8+r_9.
    \end{align*}
     Thus, $f(\alpha)$ is the noncrossing partition

    \begin{align*}
    &(4,-11)(-4,11)(6,-10)(-6,10)(8,-9)(-8,9)(1,3)(-1,-3)(4,5)(-4,-5)(6,7)(-6,-7)\\
    &=(4,5,-11)(-4,-5,11)(6,7,-10)(-6,-7,10)(8,-9)(-8,9)(1,3)(-1,-3)
    \end{align*}
     represented below:
    \begin{displaymath}\begindc{\commdiag}[5]
\obj(-22,0)[6]{$-6$}\obj(-18,0)[7]{$-7$}\obj(-14,0)[8]{$-8$}\obj(-10,0)[9]{$-9$}\obj(-6,0)[10]{$-10$}\obj(-2,0)[11]{$-11$}
\obj(-34,0)[3]{$-3$}\obj(-30,0)[4]{$-4$}\obj(-26,0)[5]{$-5$}
\obj(-42,0)[1]{$-1$}\obj(-38,0)[2]{$-2$}

\obj(2,0)[-1]{$1$}\obj(6,0)[-2]{$2$}\obj(10,0)[-3]{$3$}\obj(14,0)[-4]{$4$}\obj(18,0)[-5]{$5$}\obj(22,0)[-6]{$6$}
\obj(26,0)[-7]{$7$}\obj(30,0)[-8]{$8$}\obj(34,0)[-9]{$9$}\obj(38,0)[-10]{$10$}\obj(42,0)[-11]{$11$}

\cmor((-42,1)(-38,3)(-34,1))
\pdown(13,13){}[2]
\cmor((2,1)(6,3)(10,1))
\pdown(13,13){}[2]

\cmor((-30,1)(-28,2)(-26,1))
\pdown(13,13){}[2]
\cmor((-26,1)(9,18)(42,1))
\pdown(18,20){}[2]
\cmor((14,1)(16,2)(18,1))
\pdown(18,20){}[2]
\cmor((-2,1)(6,5)(14,1))
\pdown(18,20){}[2]

\cmor((-14,1)(10,11)(34,1))
\pdown(18,20){}[2]
\cmor((-10,1)(10,9)(30,1))
\pdown(18,20){}[2]

\cmor((-22,1)(-20,2)(-18,1))
\pdown(18,20){}[2]
\cmor((-18,1)(10,15)(38,1))
\pdown(18,20){}[2]

\cmor((22,1)(24,2)(26,1))
\pdown(18,20){}[2]
\cmor((-6,1)(8,7)(22,1))
\pdown(18,20){}[2]

\enddc\;\;.\end{displaymath}
\end{example}

\bigskip

\begin{lemma}\label{lemaB}
    If $\alpha\in$ \NN$(B_n)$ then $f(\alpha)\in$ \NC$(B_n)$, and $\mathfrak{T}(\alpha)=\mathfrak{T}(f(\alpha))$.
\end{lemma}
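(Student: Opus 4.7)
The plan is to prove both assertions by induction on the number $m$ of roots in $\alpha$. The base case $m=1$ is immediate: $f(\alpha_1)=\alpha_1$, and the triple of a one-root partition is preserved trivially. For the inductive step, case $(c)$ of Definition \ref{def1} reduces the problem to a single connected component, since the components have pairwise disjoint total supports; their $f$-images are then noncrossing partitions acting on disjoint subsets of $[\pm n]$, their concatenation is noncrossing, and openers, transients, and closers combine disjointly.

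For case $(a)$, where no root has a double root, Lemma \ref{l2} gives $i_1<\cdots<i_m$ and $j_1<\cdots<j_m$ for the first and last indices. The outer root $\bigcup_k\alpha_k=r_{i_1}+\cdots+r_{j_m}$ has first index $i_1$ and last index $j_m$, producing the outermost pair of arcs of the component (or a central zero block when $i_1=1$). The derived sequence $(\overline{\alpha}_2,\ldots,\overline{\alpha}_m)$ with $\overline{\alpha}_k=\alpha_{k-1}\cap\alpha_k$ has strictly increasing first indices $i_2,\ldots,i_m$ and strictly increasing last indices $j_1,\ldots,j_{m-1}$, hence is again an antichain by Lemma \ref{l2}, and each $\overline{\alpha}_k$ lies strictly inside the outer root. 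By induction, $f(\overline{\alpha}_2,\ldots,\overline{\alpha}_m)$ is noncrossing and supported strictly between $i_1$ and $j_m$, so $f(\alpha)$ is noncrossing; triple preservation follows because $\{i_1,\ldots,i_m\}$ and $\{j_1,\ldots,j_m\}$ are exactly the openers and closers both of $\alpha$ and of $f(\alpha)$.

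Case $(b)$ is the main obstacle, and I would verify it in four steps. First, the l-linking and u-linking procedures are well-defined, terminate, and do not conflict, so every root either gets linked or contributes its unlinked indices to $T$. Second, by Remark (ii), $|T|$ has the correct parity so that the pairs of indices used to build $\pi_1,\ldots,\pi_\ell$ and the lone index used for $\pi_0$ (when present) exhaust $T$ from the outside in. Third, consuming the pairs of $T$ in this ``outside in'' order produces centrally symmetric blocks $\pi_1,\ldots,\pi_\ell,\pi_0$ that are pairwise nested around $0$, hence noncrossing. Fourth, the $\theta_j$'s — d-intersections coming from l-links (which retain double roots) and intersections coming from u-links (which carry none) — form an antichain whose roots fit strictly inside the gaps of the $\pi$-family, so applying the induction hypothesis to $(\theta_{q+1},\ldots,\theta_s)$ yields a noncrossing partition nested inside the outer $\pi$-pattern. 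The delicate final step, which I expect to require the most care, is verifying that each index in $T$ plays the same role (opener, transient, or closer) in $\alpha$ as in $f(\alpha)$: the l-links are designed to match a last double index of a root in $\Gamma_d$ (a closer of a block crossing zero) with the first index of a root in $\Gamma$ (an opener), while u-links convert potential crossings on the positive side into nestings without altering the status of any endpoint. This index-by-index bookkeeping closes the induction and yields $\mathfrak{T}(\alpha)=\mathfrak{T}(f(\alpha))$.
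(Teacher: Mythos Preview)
Your overall strategy---induction on $m$, reduction to connected components, outer-root nesting for case~(a), and a separate analysis for case~(b)---matches the paper's approach closely. Case~(a) is handled essentially as the paper does, and your observation that the $i_1=1$ subcase produces a central zero block is correct.

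There is, however, a factual slip and a genuine gap in your treatment of case~(b). First, you write that the d-intersections coming from l-links ``retain double roots''. They do not: by definition $\supp(\alpha\cap^d\beta)=D_{\alpha}\cap\supp(\beta)$, and since $\beta$ has no double roots this is a set of simple roots; each such $\theta_j$ is an ordinary root with no double part. The paper uses exactly this fact to argue that $\theta_1\cdots\theta_q$ sends positives to positives.

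Second, your four-step plan for case~(b) never identifies the concrete support-containment facts that actually force the noncrossing property. The paper's argument rests on three precise claims you do not state: (i) each $D_{\alpha_j}$ is contained in some $D_{\pi_i}$, so the $\pi$'s form a nested family of zero-crossing blocks; (ii) each l-link $\theta_j$ has support inside some $D_{\alpha_i}$, hence inside some $D_{\pi_i}$, and any two such $\theta$'s have supports that are disjoint or nested; (iii) for a u-link $\theta_i$ and an l-link $\theta_j$, either their supports are disjoint or $\supp(\theta_i)\supseteq\supp(\theta_j)$, the latter occurring precisely when the u-link involves some $\alpha_u\in\Gamma_d$ that is also l-linked to a later root in $\Gamma$. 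Without these containments, ``fit strictly inside the gaps of the $\pi$-family'' is an assertion, not an argument. Your final remark on triple preservation is on the right track but again stops short of the paper's check: one verifies directly that an index $a$ is an opener (respectively transient) of $\alpha$ if and only if the same configuration of first/last/last-double indices occurs among the roots of $f(\alpha)$.
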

\begin{proof}
 Let $\alpha=(\alpha_1,\ldots,\alpha_m)$ be
an antichain in
    the root poset of type $B_n$, and let $(\alpha_1,\ldots,\alpha_w)$ be its first connected component.
Start by assuming that none of the positive roots in $\alpha$ have the simple root $r_1$ nor the double root $2r_1$. We will use induction on $m\geq 1$ to show that in this case $f(\alpha)$ is a noncrossing partition on the set $\{i-1,\ldots,q,-(i-1),\ldots,-q\}$, where $i$ is the fist index of $\alpha_1$ and $q$ is the last index of $\alpha_m$, and such that each positive integer is sent to another positive integer.
The result is clear when $m=1$. So, let $m\geq 2$ and assume the result for antichains of length less than, or equal to $m-1$. Then, we may write
\begin{displaymath}f(\alpha)=\left(\underset{k=1}{\overset{w}{\bigcup}}\alpha_k\right)f(\overline{\alpha}_2,\ldots,\overline{\alpha}_w)
f(\alpha_{w+1},\ldots,\alpha_m),\end{displaymath}
where each $\overline{\alpha}_k=\alpha_{k-1}\cap\alpha_k$, for $k=2,\ldots,w$. By the inductive step,
$f(\overline{\alpha}_2,\ldots,\overline{\alpha}_w)\equiv\pi_1$ and
$f(\alpha_{w+1},\ldots,\alpha_m)\equiv\pi_2$ are noncrossing partitions on the sets \begin{displaymath}\{a-1,\ldots,b,-(a-1),\ldots,-b\}\,\text{ and }\,\{p-1,\ldots,q,-(p-1),\ldots,-q\},\end{displaymath} respectively, where
$a$ and $p$ are the first indices of $\overline{\alpha}_2$ and $\alpha_{w+1}$, respectively, and $b$ and $q$ are the last indices of $\overline{\alpha}_w$ and $\alpha_m$, respectively. Moreover, all positive integers are sent to positive ones by $\pi_1$ and $\pi_2$.
Denoting by $j$ the last index of $\alpha_w$, we get
$\underset{k=1}{\overset{w}{\bigcup}}\alpha_k=r_i+\cdots+r_j\equiv(i-1,j)(-(i-1),-j)$ with
$i-1<a-1<b<j\leq p-1<q.$
Therefore \begin{displaymath}f(\alpha)\equiv(i-1,j)(-(i-1),-j)\pi_1\pi_2\end{displaymath} is a noncrossing partition on the set $\{i-1,\ldots,q,-(i-1),\ldots,-q\}$ sending each positive integer to another positive integer.

Note that for the rest of the proof, we may assume without
loss of generality that
 $\alpha$ is connected, since none of the  connected components of an antichain,
except possible for the first one, have double roots, and therefore
their images are noncrossing partitions sending each positive
integer to another positive integer.
%%%%%%%%%%%%%%%%%%%%%%%%%%%%%%%%%%%%%%%%%%%%%%%%%%%%%%%%%%%%%%%%%%%%%%%%%%%%%%%%%%%%%%%%%%%%%%%%%%%%%%%%%%%%%%%

\medskip

Suppose now that the first element of $\alpha_1$ is $r_1$. We will show that $f(\alpha)$ is a noncrossing partition on the set $\{i-1,\ldots,q,-(i-1),\ldots,-q\}$, where $i$ is the first index of $\alpha_2$ and $q$ is the last index of $\alpha_m$, and such that one and only one positive integer is sent to a negative one.
The result is certainly true for $m=1$, and when $m>1$ we have
\begin{displaymath}f(\alpha)=\left(\underset{k=1}{\overset{m}{\bigcup}}\alpha_k\right)f(\overline{\alpha}_2,\ldots,\overline{\alpha}_m),\end{displaymath}
where $\underset{k=1}{\overset{m}{\cup}}\alpha_k\equiv(q,-q)$,  and $\overline{\alpha}_k=\alpha_{k-1}\cap\alpha_k$ for $k=2,\ldots,m$. By the previous case, $f(\overline{\alpha}_2,\ldots,\overline{\alpha}_m)\equiv\pi$ is a noncrossing partition on the set $\{i-1,\ldots,j,-(i-1),\ldots,-j\}$, with $i$ the first index of $\alpha_2$ and $j<q$ the last index of $\alpha_{m-1}$.
Therefore, $f(\alpha)\equiv (q,-q)\pi$ is a noncrossing partition satisfying the desired conditions.

%%%%%%%%%%%%%%%%%%%%%%%%%%%%%%%%%%%%%%%%%%%%%%%%%%%%%%%%%%%%%%%%%%%%%%%%%%%%%%%%%%%%%%%%%%%%%%%%%%%%%%%%%%%%%%%%%

\medskip

Next, assume that $\alpha$ satisfies condition $(b)$ of definition \ref{def1}, and consider its image
\begin{displaymath}f(\alpha)=\pi_1\cdots\pi_{\ell}\pi_0\,\theta_1\,\cdots\theta_qf(\theta_{q+1},\ldots,\theta_s).\end{displaymath}
By the construction of the set $T$, it follows that each $D_{\alpha_j}$, $j=1,\ldots,\ell$, is contained in $D_{\pi_i}$, for some $i=1,\ldots,\ell$, and that $\pi_1\cdots\pi_{\ell}\pi_0$ is a noncrossing partition, sending each nonfixed positive integer to a negative one.
Note also that the support of each $\theta_j$, $j=1,\ldots,q$, is contained in some $D_{\alpha_i}$, $i=1,\ldots,\ell$, and therefore, in some $D_{\pi_i}$, $i=1,\ldots,\ell$. Moreover, the supports of any two roots $\theta_i$ and $\theta_j$, $1\leq i,j\leq q$, are either disjoint, or one of them is contained into the other one. Therefore $\theta_1\cdots\theta_q$ is a noncrossing partition sending each nonfixed positive integer into another positive integer.
By the previous cases, $f(\theta_{q+1},\ldots,\theta_s)$ is also a noncrossing partition sending each nonfixed positive integer into another positive integer. Again by the construction of the set $T$, we find that the support of each $\theta_j$, $j=q+1,\ldots,s$, is either contained in some $D_{\pi_i}$, or it does not intersect $D_{\pi_{\ell}}$. For each $j=1,\ldots,q$ and $i=q+1,\ldots,s$, either we have $\supp(\theta_i)\cap\supp(\theta_j)=\emptyset$, or $\supp(\theta_i)\supseteq\supp(\theta_j)$, this last case happening when $\theta_i$ arises from the intersection of two u-linked roots $\alpha_u\in\Gamma_d$ and $\alpha_v\in\Gamma$, and there is some $\alpha_{v+k}\in\Gamma$, $k\geq 1$, l-linked to $\alpha_u$, whose d-intersection gives $\theta_j$. Therefore, it follows that $f(\alpha)$ is noncrossing.

To see that $\mathfrak{T}(\alpha)=\mathfrak{T}(f(\alpha))$, notice that if $a\in op(\alpha)$, then $\alpha$ must have a root $r_{a+1}+\cdots+r_b$, for some $b>a+1$, and cannot have neither a root with  last double index equal to $a$ nor a root with last index equal to $a$. By its construction, the same is true for the sequence $f(\alpha)$, and therefore $a\in op(f(\alpha))$.
Assume now that $a\in tr(\alpha)$. Then, $a$ must appear either as the last index or last double index of a root and $a+1$ as the first index of another root. Again the same will happen in the sequence $f(\alpha)$, and thus $a\in tr(f(\alpha))$. It follows that $\mathfrak{T}(\alpha)=\mathfrak{T}(f(\alpha))$.
 \end{proof}

    With some minor adaptations, the proof of lemma \ref{lemaB}, in the case where neither the simple root $r_1$ nor the double root $2r_1$ are present in $\alpha$, gives the  type $A$ analog of the previous result.

     \begin{corollary}
    If $\alpha\in$ \NN$(A_{n-1})$ then $f(\alpha)\in$ \NC$(A_{n-1})$, and $\mathfrak{T}(\alpha)=\mathfrak{T}(f(\alpha))$.
\end{corollary}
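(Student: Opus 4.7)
The plan is to mirror the opening portion of the proof of Lemma \ref{lemaB}. In type $A_{n-1}$ there are no double roots, so by the Remark after Definition \ref{def1} the map $f$ is defined solely by conditions $(a)$ and $(c)$. In particular, the entire argument corresponds to the sub-case handled at the very beginning of the proof of Lemma \ref{lemaB} (``none of the positive roots in $\alpha$ have the simple root $r_1$ nor the double root $2r_1$''), so only ``minor adaptations'' of indexing are needed.

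First I would reduce to the connected case. If $(\alpha_1,\ldots,\alpha_i), (\alpha_{i+1},\ldots,\alpha_j),\ldots$ are the connected components of $\alpha$, then by Lemma \ref{l1} their index intervals are pairwise disjoint, so the images under $f$ are supported on disjoint subsets of $[n]$; concatenating them keeps noncrossingness and adds up the triples $\mathfrak{T}$. It therefore suffices to treat a connected antichain $\alpha=(\alpha_1,\ldots,\alpha_m)$, where I would induct on $m$. The base case $m=1$ is immediate, since $f(\alpha_1)=\alpha_1$ corresponds to a single transposition. For the inductive step, write
\begin{displaymath}
f(\alpha)=\left(\bigcup_{k=1}^{m}\alpha_k\right)f(\overline{\alpha}_2,\ldots,\overline{\alpha}_m),
\end{displaymath}
where $\overline{\alpha}_k=\alpha_{k-1}\cap\alpha_k$. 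Denoting the first and last indices of $\alpha_k$ by $i_k,j_k$, connectedness plus Lemma \ref{l1} gives $i_{k-1}<i_k\le j_{k-1}+1$ and $j_{k-1}<j_k$, so $\overline{\alpha}_k$ is genuinely a positive root with first index $i_k$ and last index $j_{k-1}$. Both coordinates then strictly increase along $(\overline{\alpha}_2,\ldots,\overline{\alpha}_m)$, so by Lemma \ref{l1} this is again an antichain and the inductive hypothesis applies.

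To combine, observe that $\bigcup_{k=1}^{m}\alpha_k=r_{i_1}+\cdots+r_{j_m}$ corresponds to the transposition $(i_1,j_m+1)$, while by induction $f(\overline{\alpha}_2,\ldots,\overline{\alpha}_m)$ is a noncrossing partition supported on the indices $\{i_2,\ldots,j_{m-1}+1\}\subset\{i_1+1,\ldots,j_m\}$. Hence the outer arc strictly encloses the recursively produced partition, and the whole is noncrossing. For the triple $\mathfrak{T}$, I would argue exactly as at the end of the Lemma \ref{lemaB} proof: an integer $a$ belongs to $op(\alpha)$ iff some root of $\alpha$ has first index $a$ while no root has last index $a-1$, and $a\in tr(\alpha)$ iff both occur; the recursive construction of $f(\alpha)$ preserves the multiset of (first index, last index) pairs occurring among its roots (the outer arc keeps $(i_1,j_m)$, and the $\overline{\alpha}_k$ carry $(i_k,j_{k-1})$), so the same characterization gives $op,cl,tr$ for $f(\alpha)$.

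The only potentially delicate point is the bookkeeping that $\overline{\alpha}_k$ really has first index $i_k$ and last index $j_{k-1}$, which as noted above follows from Lemma \ref{l1} applied to the connected pair $(\alpha_{k-1},\alpha_k)$; everything else is a routine transcription of the type $B$ argument with the double-root machinery erased. Consequently $f$ sends $\NN(A_{n-1})$ into $\NC(A_{n-1})$ while preserving $\mathfrak{T}$, as claimed.
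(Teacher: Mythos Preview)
Your proposal is correct and follows exactly the approach the paper intends: the paper's own ``proof'' of this corollary is simply the remark that the first case of Lemma~\ref{lemaB} (no $r_1$ or $2r_1$ present) carries over with minor adaptations, and you have spelled out precisely those adaptations. Two small cosmetic points: connectedness actually gives $i_k\le j_{k-1}$ (not $j_{k-1}+1$), which is what you need for $\overline{\alpha}_k$ to be a genuine positive root; and what is preserved under one recursion step is the \emph{set of first indices} and the \emph{set of last indices} separately (the multiset of \emph{pairs} $(i_k,j_k)$ is not preserved, but you only use the former).
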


We will now construct the inverse function of $f$, thus showing that $f$ establishes a bijection between the sets $\NN(\Psi)$ and $\NC(\Psi)$, for  $\Psi=A_{n-1}$ or $\Psi=B_n$. For that propose, recall the following  property.

\begin{lemma}\label{trans}
    Two distinct transpositions $(a,b)$ and $(i,j)$ in $\mathfrak{S}_n$ commute if and only if the sets $\{i,j\}$ and $\{a,b\}$ are disjoint.
\end{lemma}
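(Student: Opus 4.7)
The plan is to prove both directions by a straightforward case analysis on the size of the intersection $\{a,b\} \cap \{i,j\}$, since distinctness of the transpositions rules out the case where this intersection has size $2$.

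For the easy direction, I would assume that $\{a,b\} \cap \{i,j\} = \emptyset$. Then the transposition $(a,b)$ fixes every element of $\{i,j\}$ pointwise, and likewise $(i,j)$ fixes every element of $\{a,b\}$. Evaluating the two products $(a,b)(i,j)$ and $(i,j)(a,b)$ on each of the four points $a,b,i,j$ — and noting that every other point is fixed by both transpositions — immediately shows that the two permutations agree, so they commute.

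For the converse, I would use the contrapositive: assume the two supports intersect. Since $(a,b) \neq (i,j)$, the intersection cannot be the whole pair (otherwise $\{a,b\} = \{i,j\}$ would force equality of the transpositions), so the intersection has exactly one element. Without loss of generality, I can relabel so that the common element is $b = i$, and then $a, b, j$ are pairwise distinct. A direct computation gives $(a,b)(b,j)\colon a \mapsto a \mapsto b$, while $(b,j)(a,b)\colon a \mapsto b \mapsto j$. Since $b \neq j$, the two composite permutations differ on $a$, so they do not commute.

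The lemma is elementary and there is no real obstacle; the only point requiring minor care is ruling out the intersection of size $2$ by invoking the distinctness hypothesis, and handling the single labeling choice in the one-element-intersection case (justifying the "without loss of generality" by symmetry in $(a,b)$ versus $(b,a)$ and in $(i,j)$ versus $(j,i)$).
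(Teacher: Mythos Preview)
Your argument is correct: the case analysis on $|\{a,b\}\cap\{i,j\}|$ together with the direct evaluations settles both directions, and you handle the distinctness hypothesis properly to exclude the size-two intersection. The paper itself offers no proof of this lemma at all; it simply introduces it with the phrase ``recall the following property'' and states it without justification. So there is nothing to compare against---your write-up supplies a complete elementary proof where the paper gives none.
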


If $\pi_1\cdots\pi_p$ is the cycle structure of a signed permutation $\pi$, then for each cycle $\pi_i=(ij\cdots k)$ there is another cycle $\pi_j=(-i-j\cdots-k)$. Denote by $\pi'_i$ the cycle in $\{\pi_i,\pi_j\}$ having the smallest
positive integer (when $\pi_i=\pi_j$ then $\pi'_i$ is just $\pi_i$), and call positive cycle structure to the subword of $\pi_1\cdots\pi_p$
formed by the cycles $\pi'_i$. Extend this definition to permutations in $\mathfrak{S}_n$ by identifying positive cycle structure with cycle structure.

\begin{theorem}\label{def2}
    The map $f$ is a bijection between the sets $\NN(\Psi)$ and $\NC(\Psi)$, for $\Psi=A_{n-1}$ or $\Psi=B_n$, which preserves the triples $(op(\pi),cl(\pi),tr(\pi))$.
\end{theorem}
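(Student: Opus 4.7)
The plan is to exhibit an explicit inverse $g\colon\NC(\Psi)\to\NN(\Psi)$ obtained by reading Definition~\ref{def1} in reverse, and then verify $g\circ f=\mathrm{id}$ and $f\circ g=\mathrm{id}$ by induction on the number of roots. The work already in place is substantial: Lemma~\ref{lemaB} together with its corollary shows $f(\NN(\Psi))\subseteq\NC(\Psi)$ and that $\mathfrak{T}$ is preserved, so the only missing ingredient is bijectivity. As auxiliary tools I would rely on Lemma~\ref{trans} to freely commute disjoint transpositions inside a cycle expression, and on the positive cycle structure to normalize type $B$ outputs so that the ``outermost'' spanning cycle can be singled out unambiguously.

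For type $A$ the inverse is transparent. Given $\pi\in\NC(n)$, first split it into connected components as in case $(c)$; within a single component the block whose arc covers all others is unique and must be the image of the spanning root $\bigcup_{k=1}^m\alpha_k=r_{i_1}+\cdots+r_{j_m}$, revealing $i_1$ and $j_m$. Removing this block leaves a smaller noncrossing partition which, by induction, is the image of an antichain $(\bar\alpha_2,\ldots,\bar\alpha_m)$ with $\bar\alpha_k=r_{i_k}+\cdots+r_{j_{k-1}}$. The remaining indices are then determined without ambiguity: $i_k$ and $j_{k-1}$ appear as the first and last indices of $\bar\alpha_k$, which together with $i_1$ and $j_m$ reassemble each $\alpha_k=r_{i_k}+\cdots+r_{j_k}$. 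Lemma~\ref{l1} confirms that the reconstructed tuple is indeed an antichain, and induction closes this case.

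For type $B$ the picture is richer and constitutes the main obstacle. Given $\pi\in\NC(B_n)$, one first identifies the cycles that pair a positive integer with a negative one: these are precisely the images of $\pi_1,\ldots,\pi_\ell,\pi_0$, they nest inside one another, and from this nesting the set $T$ and the last double indices of the $\alpha_j$ with double roots can be read off. The remaining blocks form a noncrossing partition whose arcs either sit inside some $D_{\pi_i}$ (these arise from the d-intersections of l-linked pairs, yielding the $\theta_j$ with $j\leq q$) or are disjoint from the double-root range (these arise from the intersections of u-linked pairs and from the recursive call, yielding the $\theta_j$ with $j>q$). Each such $\theta_j$ with $j\leq q$ attaches to a unique pair $(\alpha_k,\alpha_{m'})$ via $\theta_j=D_{\alpha_k}\cap\supp(\alpha_{m'})$, and symmetrically for the u-linked contributions, so the links that were drawn in Definition~\ref{def1} can be recovered.

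The hard step is verifying that this reconstruction is self-consistent: one must argue that the l- and u-linking produced by $g$ is exactly the one prescribed by Definition~\ref{def1} when $f$ is subsequently applied. Preservation of $\mathfrak{T}$ under $f$ is essential here, since the openers, closers and transients of $\pi$ determine which indices may appear as first, last, or last double indices in the preimage antichain, ruling out competing reconstructions. The constraints in Lemma~\ref{l2} similarly enforce that the reassembled $(\alpha_1,\ldots,\alpha_m)$ is an antichain. Once well-definedness is in hand, both $f\circ g=\mathrm{id}$ and $g\circ f=\mathrm{id}$ follow by induction on $m$, matching the peeling-off in $g$ against the recursive step of $f$ one cycle at a time.
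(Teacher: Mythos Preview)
Your approach is correct in outline and shares the paper's overall strategy of exhibiting an explicit inverse $g$, but the \emph{construction} of $g$ is genuinely different. The paper does not invert $f$ recursively. Instead, given $\pi\in\NC(\Psi)$ it decomposes the positive cycle structure of $\pi$ into transpositions, converts each transposition to its root, and orders the resulting sequence into connected components. Within a component it then runs a short one-pass algorithm that classifies which roots came from d-intersections of l-linked pairs (building sets $\Gamma'$ and $\Gamma'_d$), and from this extracts three index sets $T$, $F_{st}$, $L_{st}$ from which the antichain $(\overline\alpha_1,\ldots,\overline\alpha_k)$ is written down directly, with no recursion at all. In type $A$ this collapses to simply reading off the multisets of first indices and last indices of all the transposition-roots and pairing them in increasing order.

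What this buys the paper is exactly the point you flag as the ``hard step'': because the antichain is produced in one shot from explicit index sets, there is no need to argue that a recursively reconstructed l/u-linking coincides with the one Definition~\ref{def1} would generate; one checks instead that the algorithm's sets $T$, $F_{st}$, $L_{st}$ match those implicit in step~$(b)$ of $f$. Your recursive peeling is conceptually closer to the definition of $f$ and would also work, but it forces you to track, layer by layer, which $\theta_j$ came from an l-link versus a u-link and to prove that this labeling is recoverable before the $\alpha_k$'s themselves are known---a genuine bookkeeping burden that the paper's flat construction sidesteps.
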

\begin{proof}
We will construct the inverse map $g:\NC(\Psi)\rightarrow\NN(\Psi)$
of $f$. Given $\pi\in\NC(\Psi)$, let
$\pi_1\cdots\pi_s$ be its positive cycle structure. Replace each cycle $\pi_i=(i_1i_2\cdots i_k)$  by $(i_1i_2)(i_2i_3)\cdots(i_{k-1}i_k)$, if $i_{\ell}>0$ for $\ell=1,\ldots,k$, or by
$$\pi_i=(i_1i_{j+1})(i_1i_2)(i_2i_3)\cdots(i_{j-1}i_j)(i_{j+1}i_{j+2})\cdots(i_{k-1}i_{k}),$$ if
$i_{\ell}>0$ for $\ell=1,\ldots,j$, and $i_{\ell}<0$ for $\ell=j+1,\ldots,k$. Next,
baring in mind lemma \ref{trans} and recalling that $\pi$ is noncrossing, move all transpositions $(i,j)$, with $i>0$ and $j<0$ (if any), to the leftmost
   positions and order them by its least positive element, and order all remaining transpositions $(i,j)$, with $i,j>0$, by its least positive
   integer. Replace each transposition $(ij)$ by its correspondent root in the root system of type $\Psi$, and let
\begin{equation}\label{seqnc}
(\alpha_1,\ldots,\alpha_k)(\alpha_{k+1},\ldots,\alpha_{\ell})\cdots(\alpha_m,\ldots,\alpha_n)
\end{equation}
be the correspondent sequence of roots, divided by its connected
components.  Note that given two distinct roots in \eqref{seqnc}, the sets formed by the first and last indices, if there are no double roots, or by the last and last double indices, otherwise, are clearly disjoint.

We start by considering  that the sequence \eqref{seqnc} has only one connected component $(\alpha_1,\ldots,\alpha_k)$.
Let $\Gamma_d=(\alpha_1,\ldots,\alpha_r)$ be the subsequence formed by the roots having double roots, and denote by $\Gamma=(\alpha_{r+1},\ldots,\alpha_k)$ the remaining subsequence. Define $\Gamma'=\Gamma'_d=\emptyset$. If $\Gamma_d$ is not empty and $r\neq k$, apply the following algorithm:

 Let $\overline{\Gamma}$ be the subsequence of $\Gamma$ obtained by striking out the root $\alpha_{r+1}$ if its first index is 1.
While $\overline{\Gamma}\neq\emptyset$, repeat the following steps:

 $(i)$ Let $\alpha_i$ be the leftmost root in $\overline{\Gamma}$ and
check if $\supp(\alpha_i)\subseteq D_{\alpha_j}$, for some $\alpha_j\in\Gamma_d\setminus\Gamma'_d$.

$(ii)$ If so, let $\alpha_{i_j}$ be the rightmost root in $\Gamma_d\setminus\Gamma'_d$ with this property. Update $\Gamma'$ by including in it the rightmost root $\overline{\alpha}$ of  $\overline{\Gamma}$ whose support is contained in $\supp(\alpha_i)$. Update $\overline{\Gamma}$ by striking out the root $\overline{\alpha}$ and update $\Gamma'_d$ by including in this set the root $\alpha_{i_j}$.

 $(iii)$ Otherwise, update $\overline{\Gamma}$ by striking out the root $\alpha_i$.

 Next, let $T=\{t_1>\cdots>t_r\}$ be the set formed by all last double indices of the roots in $\Gamma_d\setminus\Gamma'_d$ and by the last indices of the roots in $\Gamma'$;
 let $F_{st}=\{f_{r+1}<\cdots<f_k\}$ be the set formed by the first indices of the roots in $\Gamma$, and let $L_{st}=\{\ell_1<\cdots<\ell_k\}$ be the set formed by the last indices  of the roots in $(\Gamma\setminus\Gamma')\cup\Gamma_d$ and by the last double indices of the roots in $\Gamma'_d$. By this construction, we have $f_i<\ell_i$ for $i=1,\ldots,r$, and $f_i<\ell_i$, for $i=r+1,\ldots,k$. Then, define
 $$g(\pi)=(\overline{\alpha}_1,\ldots,\overline{\alpha}_k),$$
 where for $i=1,\ldots,r$, $\overline{\alpha}_i=2r_1+\cdots+2r_{t_i}+r_{t_i+1}+\cdots+r_{\ell_i}$,
 and for $i=r+1,\ldots,k$, $\overline{\alpha}_i=r_{f_i}+\cdots+r_{\ell_i}$.

\medskip

For the general case define
$$g(\pi)=g(\alpha_1,\ldots,\alpha_k)g(\alpha_{k+1},\ldots,\alpha_{\ell})\cdots g(\alpha_m,\ldots,\alpha_n).$$

It is clear from this construction that $g(\pi)$ is an antichain in the root poset of type $\Psi$. Moreover, a closer look at the construction of the map $f$ shows that $g$ is the inverse of $f$. Thus,  $f$ (and $g$)
establishes a bijection between nonnesting and noncrossing
partitions of types $A$ and $B$.
\end{proof}

In the following examples we illustrate the application of the map
$g$.

\begin{example}
    Consider the cycle structure of the noncrossing partition $\pi=(18)(2347)(56)$ in the symmetric group $\mathfrak{S}_8$ used in example \ref{exnn1}.
    Following the proof of theorem \ref{def2}, write
    \begin{align*}
        \pi&\equiv(18)(2347)(56)\\
        &=(18)(23)(34)(47)(56)\\
        &\equiv (r_1+\cdots+r_7)r_2r_3(r_4+r_5+r_6)r_5
    \end{align*}
    Note that $\pi$ has only one connected component, and there are no double roots. Next define the sets
    $$F_{st}=\{1,2,3,4,5\},\,\text{ and }L_{st}=\{2,3,5,6,7\}.$$
    Thus, we find that the image of $\pi$ by the map $g$ is the antichain
    $$g(\pi)=(r_1+r_2,r_2+r_3,r_3+r_4+r_5,r_4+r_5+r_6,r_5+r_6+r_7).$$
\end{example}

\begin{example}
Consider now the noncrossing partition $$\pi=(2,5,-9)(-2,-5,9)(6,-8)(-6,8)(7,-7)(3,4)(-3,-4)$$ obtained in example \ref{exnn2}. Its positive cycle structure is
\begin{align*}
(2,-9)(2,5)(6,-8)(7,-7)(3,4)=(2,-9)(6,-8)(7,-7)(2,5)(3,4),
\end{align*}
and thus we get $$\pi\equiv(2r_1+2r_2+r_3+\cdots+r_9,2r_1+\cdots+2r_6+r_7+r_8,r_1+\cdots+r_7,r_3+r_4+r_5,r_4).$$
Next, construct the sets
\begin{align*}
&T=\{4,2\},\quad F_{st}=\{1,3,4\}\\
&L_{st}=\{5,6,7,8,9\}.
\end{align*}
Therefore, the image of $\pi$ by the map $g$ is the antichain
$$(2r_1+\cdots+2r_4+r_5,2r_1+2r_2+r_3+\cdots+r_6,r_1+\cdots+r_7,r_3+\cdots+r_8,r_4+\cdots+r_9).$$
\end{example}

\medskip

Finally, in the next result we prove that the map $f$ generalizes
the bijection that locally converts
each crossing to a nesting.

\begin{theorem}
    When restricted to the type $A_{n-1}$ case, the map $f$ coincides with
    the L-map.
\end{theorem}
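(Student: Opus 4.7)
The plan is to induct on the number of roots $m$ in the antichain. In type $A_{n-1}$ there are no double roots, so Definition \ref{def1} reduces to conditions $(a)$ and $(c)$. Two roots lying in distinct connected components of an antichain have disjoint supports, so their arcs do not cross in the graphical picture; hence both $f$, through condition $(c)$, and the L-map decompose as a product over connected components. This reduces the statement to a single connected antichain.

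For a connected antichain $(\alpha_1,\ldots,\alpha_m)$ with $m\geq 2$, write $\alpha_k = r_{i_k}+\cdots+r_{j_k}$, so by Lemma \ref{l1} and connectivity $i_1<\cdots<i_m$, $j_1<\cdots<j_m$, and $i_{k+1}\leq j_k$. Condition $(a)$ gives $f(\alpha)=\bigl(\bigcup_k\alpha_k\bigr)\,f(\overline{\alpha}_2,\ldots,\overline{\alpha}_m)$, so it suffices to prove the identical recursion for the L-map. I would do this by resolving crossings in a specific left-to-right order. First apply the L-map to $(\alpha_1,\alpha_2)$, producing the envelope arc $\alpha_1\cup\alpha_2=r_{i_1}+\cdots+r_{j_2}$ and the inner arc $\alpha_1\cap\alpha_2=\overline{\alpha}_2$. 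Then apply it to the envelope and $\alpha_3$, which cross by connectivity; the output is the larger envelope $\alpha_1\cup\alpha_2\cup\alpha_3$ together with $(\alpha_1\cup\alpha_2)\cap\alpha_3=\overline{\alpha}_3$, since $i_3>i_2$ forces the envelope's overlap with $\alpha_3$ to coincide with $\alpha_2$'s. Iterating, after $m-1$ local moves the envelope has grown to $\bigcup_k\alpha_k$ and the peeled inner arcs are precisely $\overline{\alpha}_2,\ldots,\overline{\alpha}_m$.

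At this stage the envelope strictly contains every inner arc, so no further local move involves it; any remaining crossings lie entirely within $(\overline{\alpha}_2,\ldots,\overline{\alpha}_m)$. Because the L-map is confluent, the overall result equals the envelope concatenated with the L-map of $(\overline{\alpha}_2,\ldots,\overline{\alpha}_m)$. Since this antichain has $m-1$ roots, the inductive hypothesis identifies it with $f(\overline{\alpha}_2,\ldots,\overline{\alpha}_m)$, closing the induction. The main obstacle is the step-by-step verification that $(\alpha_1\cup\cdots\cup\alpha_{k-1})\cap\alpha_k=\overline{\alpha}_k$ at each stage; this is a short index calculation using the strict monotonicity of the $i_k$, but it is the technical heart of the argument, being exactly what makes the chosen order of local moves replicate the recursion defining $f$.
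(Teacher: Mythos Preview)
Your proof is correct and takes essentially the same approach as the paper: induction on $m$, reduction to connected antichains, and verification that the L-map satisfies the recursion $L(\alpha)=\bigl(\bigcup_k\alpha_k\bigr)\,L(\overline{\alpha}_2,\ldots,\overline{\alpha}_m)$ by successively resolving the crossing between the growing envelope and the next root $\alpha_k$. The paper phrases this as ``converting, from left to right, each local crossing between the first root and the leftmost root in $\alpha$ whose arcs cross,'' which is exactly your envelope-peeling; you are simply more explicit about the index check $(\alpha_1\cup\cdots\cup\alpha_{k-1})\cap\alpha_k=\overline{\alpha}_k$ and about invoking confluence of the local moves, which the paper uses implicitly.
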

\begin{proof}
Let $\alpha=(\alpha_1,\ldots,\alpha_m)$ be an antichain in the root
poset of type $A_{n-1}$. The result will be handled by induction
over $m\geq 1$.
    Without loss of generality, we may assume that $\alpha$ is
    connected, since otherwise there is an integer $1<k<n-1$ such that
    each integer less (resp. greater) than $k$ is sent by $\alpha$ to an integer that
    still is less (resp. greater) that $k$. Therefore, the same
    happens with the image of $\alpha$  by either the map $f$ or the
    L-map.

    The result is vacuous when $m=1$, and when $m=2$, the only
    connected nonnesting partition which does not stay invariant
    under the maps $f$ and $L$ is
    $\alpha=(r_i+\cdots+r_{i'})(r_j+\cdots+r_{j'})$, for some
    integers
    $1\leq i<j<i'<j'\leq n-1$. In this case, the equality between $f$ and the
    L-map is obvious. So, let $m>2$ and assume the result for
    antichains of length $\leq m-1$. Let $i$ and $j$ be,
    respectively, the first and last indices of $\alpha_1$ and
    $\alpha_m$. Then,
    \begin{displaymath}f(\alpha)=(r_i+\cdots+r_j)f(\overline{\alpha}_2,\ldots,\overline{\alpha}_m),\end{displaymath}
    where each $\overline{\alpha}_k=\alpha_{k-1}\cap\alpha_k$ for $k\geq 2$, and the antichain $(\overline{\alpha}_2,\ldots,\overline{\alpha}_m)$ is
    clearly nonnesting, and not necessarily connected. By the
    inductive step,
    $f(\overline{\alpha}_2,\ldots,\overline{\alpha}_m)=L(\overline{\alpha}_2,\ldots,\overline{\alpha}_m)$.
    Moreover, note that converting, from left to right, each local
    crossing between the first root and the leftmost root in $\alpha$ whose arcs cross, into a nesting gives, precisely,
    \begin{displaymath}(r_i+\cdots+r_j)L(\overline{\alpha}_2,\ldots,\overline{\alpha}_m),\end{displaymath}
    and this operation may be considered the first step of the
    L-map. Thus, we find that $f(\alpha)=L(\alpha)$.
\end{proof}

\begin{example}
Consider the antichain
$\alpha=(r_1+r_2+r_3,r_2+r_3+r_4+r_5,r_3+r_4+r_5+r_6,r_5+r_6+r_7)$
in the root poset of type $A_7$. Applying the map $f$ we get
\begin{align*}
f(\alpha)&=(r_1+\cdots+r_7)f(r_2+r_3,r_3+r_4+r_5,r_5+r_6)\\
&=(r_1+\cdots+r_7)(r_2+r_3+r_4+r_5+r_6)f(r_3,r_5)\\
&=(r_1+\cdots+r_7)(r_2+r_3+r_4+r_5+r_6)r_3r_5\equiv (18)(27)(34)(56).
 \end{align*} On the other hand, applying the L-map to each
crossing between the first root and the leftmost root in $\alpha$
whose arcs cross, we get successively
\begin{displaymath}\begindc{\commdiag}[8]
\obj(-24,0)[1]{1}\obj(-21,0)[2]{2}\obj(-18,0)[3]{3}\obj(-15,0)[4]{4}\obj(-12,0)[5]{5}\obj(0,2)[a]{$\rightarrow$}
\obj(-9,0)[6]{6}\obj(-6,0)[7]{7}\obj(-3,0)[8]{8}

\obj(24,0)[-8]{8}\obj(21,0)[-7]{7}\obj(18,0)[-6]{6}\obj(15,0)[-5]{5}\obj(12,0)[-4]{4}
\obj(9,0)[-3]{3}\obj(6,0)[-2]{2}\obj(3,0)[-1]{1}

\cmor((-24,1)(-20,3)(-15,1)) \pdown(0,4){}[2]
\cmor((-21,1)(-15,4)(-9,1)) \pdown(0,4){}[2]
\cmor((-18,1)(-12,3)(-6,1)) \pdown(0,4){}[2]
\cmor((-12,1)(-8,3)(-3,1)) \pdown(0,4){}[2]

\cmor((3,1)(10,4)(18,1)) \pdown(0,4){}[2] \cmor((6,1)(9,3)(12,1))
\pdown(0,4){}[2] \cmor((9,1)(15,3)(21,1)) \pdown(0,4){}[2]
\cmor((15,1)(19,3)(24,1)) \pdown(0,4){}[2]

\enddc\end{displaymath}

\begin{displaymath}\begindc{\commdiag}[8]
\obj(-26,2)[a]{$\rightarrow$}
\obj(-24,0)[1]{1}\obj(-21,0)[2]{2}\obj(-18,0)[3]{3}\obj(-15,0)[4]{4}\obj(-12,0)[5]{5}\obj(0,2)[a]{$\rightarrow$}
\obj(-9,0)[6]{6}\obj(-6,0)[7]{7}\obj(-3,0)[8]{8}

\obj(24,0)[-8]{8}\obj(21,0)[-7]{7}\obj(18,0)[-6]{6}\obj(15,0)[-5]{5}\obj(12,0)[-4]{4}
\obj(9,0)[-3]{3}\obj(6,0)[-2]{2}\obj(3,0)[-1]{1}

\cmor((-24,1)(-15,4)(-6,1)) \pdown(0,4){}[2]
\cmor((-21,1)(-18,3)(-15,1)) \pdown(0,4){}[2]
\cmor((-18,1)(-14,3)(-9,1)) \pdown(0,4){}[2]
\cmor((-12,1)(-8,3)(-3,1)) \pdown(0,4){}[2]

\cmor((3,1)(13,4)(24,1)) \pdown(0,4){}[2] \cmor((6,1)(9,3)(12,1))
\pdown(0,4){}[2] \cmor((9,1)(13,3)(18,1)) \pdown(0,4){}[2]
\cmor((15,1)(18,3)(21,1)) \pdown(0,4){}[2]
\enddc\end{displaymath}

Thus, in the first step of the L-map, we get
\begin{math}L(\alpha)=(r_1+\cdots+r_7)L(r_2+r_3,r_3+r_4+r_5,r_5+r_6).\end{math}
Continuing the application of the L-map, now replacing, by a
nesting, each crossing between the second root and the leftmost root
in $\alpha$ whose arcs cross, we get
\begin{displaymath}\begindc{\commdiag}[8]
\obj(-26,2)[a]{$\rightarrow$}\obj(-24,0)[1]{1}\obj(-21,0)[2]{2}\obj(-18,0)[3]{3}\obj(-15,0)[4]{4}\obj(-12,0)[5]{5}\obj(0,2)[a]{$\rightarrow$}
\obj(-9,0)[6]{6}\obj(-6,0)[7]{7}\obj(-3,0)[8]{8}

\obj(24,0)[-8]{8}\obj(21,0)[-7]{7}\obj(18,0)[-6]{6}\obj(15,0)[-5]{5}\obj(12,0)[-4]{4}
\obj(9,0)[-3]{3}\obj(6,0)[-2]{2}\obj(3,0)[-1]{1}

\cmor((-24,1)(-14,4)(-3,1)) \pdown(0,4){}[2]
\cmor((-21,1)(-15,3)(-9,1)) \pdown(0,4){}[2]
\cmor((-18,1)(-16,2)(-15,1)) \pdown(0,4){}[2]
\cmor((-12,1)(-9,3)(-6,1)) \pdown(0,4){}[2]

\cmor((3,1)(13,4)(24,1)) \pdown(0,4){}[2] \cmor((6,1)(13,3)(21,1))
\pdown(0,4){}[2] \cmor((9,1)(10,2)(12,1)) \pdown(0,4){}[2]
\cmor((15,1)(16,2)(18,1)) \pdown(0,4){}[2]
\enddc,\end{displaymath}
and therefore, we have
\begin{math}L(\alpha)=(r_1+\cdots+r_7)(r_2+r_3+r_4+r_5+r_6)r_3r_5=f(\alpha).\end{math}
\end{example}

\emph{Acknowledgements}.
The author thanks Christian Krattenthaler and Alexander Kovacec for  useful comments and suggestions.

\nocite{*}

\end{document}